\newtheorem{theorem}{Theorem}
\newtheorem{lemma}[theorem]{Lemma}
\theoremstyle{definition}
\theoremstyle{remark}
\newtheorem*{remark}{Remark}
\DeclarePairedDelimiter\norm{\lVert}{\rVert}
\newcommand{\msin}{\frac{m}{\sin{\theta}}}
\title{Non-monotonicity for the\\3D magnetic Robin Laplacian}
\author{Germ\'an Miranda}
\begin{document}
	
	\begin{abstract}
		Previous works provided several counterexamples to monoto\-nicity of the lowest eigenvalue for the magnetic Laplacian in the two-dimensional case. However, the three-dimensional case is less studied. We use the results obtained by Helffer, Kachmar and Raymond to provide one of the first counterexamples in 3D. Considering the Robin  magnetic Laplacian on the unit ball with a constant magnetic field, we show the non-monotonicity of the lowest eigenvalue asymptotics when the Robin parameter tends to \(+\infty\). 
	\end{abstract}
	\maketitle
	
	\section{Introduction}
	In \cite{helffer2020magnetic} the authors considered the Robin magnetic Laplacian in the unit ball \(\Omega =\{ x\in \mathbb{R}^3 : |x| < 1\}\)  with a smooth magnetic field. They established a precise asymptotics for the lowest eigenvalue of the operator when the Robin parameter goes to \(+\infty\) (see section \ref{Discussion}). In particular, if the magnetic field is uniform of strength \(b>0\), the asymptotics for the lowest eigenvalue, \(\lambda(\gamma,b)\), is given by
	\[\lambda(\gamma,b) = -\gamma^{2} + 2 \gamma + \mathfrak{e}(b) + o(1) ,\]
	as \(\gamma \rightarrow +\infty\), where \(\mathfrak{e}(b)=\inf_{m\in \mathbb{Z}} \lambda_m(b)\) and \(\lambda_{m}(b)\) is the effective eigenvalue defined as
	\begin{equation}\label{Lambdam}
		\lambda_m (b) = \inf_{f\in \mathcal{D}(q_{m,b}) \setminus \{0\}} \frac{q_{m,b}(f)}{\norm{f}^2_{\mathcal{H}}},
	\end{equation}
	where \(\mathcal{H} = L^2((0,\pi); \sin{\theta}\, d\theta)\),
	\begin{equation}\label{Eq 1}
		q_{m,b}(f) = \int_0^{\pi} \Bigl(|f'(\theta)|^2 + \Bigl(\frac{m}{\sin{\theta}}- \frac{b}{2}\Bigr)^2 |f|^2\Bigr) \sin{\theta}\, d\theta , 
	\end{equation}
	and 
	\begin{equation}\label{domain qmb}
		\mathcal{D}(q_{m,b})= \begin{cases} \{f\in \mathcal{H} : \frac{1}{\sin{\theta}}f, f'\in \mathcal{H}\} & \text{ if }m\neq 0 \\ \{ f\in \mathcal{H}: f'\in \mathcal{H}\} & \text{ if } m=0
		\end{cases}.  
	\end{equation}
	We denote by \(S_{m,b}\) the Friedrichs extension associated with \(q_{m,b}\).
	
	Numerical computations stated in \cite{helffer2020magnetic} suggest a non-monotonic behaviour. Our goal is to give a formal proof of this.
	\begin{theorem}\label{Main theorem}
		The function \(b\mapsto \mathfrak{e}(b)\) is non-monotonic.
	\end{theorem}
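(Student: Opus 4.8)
The plan is to show that $\mathfrak{e}$ increases and then decreases on the bounded interval $[0,2]$, exhibiting a strict local maximum. Throughout I work with $b\ge 0$; the case $b\le 0$ follows from the symmetry $\lambda_{-m}(-b)=\lambda_m(b)$, which is immediate from \eqref{Eq 1}. Two elementary facts organize the competition between modes. First, since the Laplace--Beltrami part $-\frac{1}{\sin\theta}(\sin\theta\,f')'$ is nonnegative with the constants in its kernel, the $m=0$ operator gives exactly $\lambda_0(b)=\frac{b^2}{4}$. Second, dropping the kinetic term in \eqref{Eq 1} yields the bound $\lambda_m(b)\ge \min_{\theta}\bigl(\frac{m}{\sin\theta}-\frac b2\bigr)^2$, which for $|m|\ge 2$ and $0\le b\le 2$ equals $(|m|-\frac b2)^2\ge 1$. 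Hence on $[0,2]$ only the modes $m=0$ and $m=\pm1$ can realize the infimum $\mathfrak{e}(b)=\inf_m\lambda_m(b)$, and among $m=\pm1$ the choice $m=+1$ is never worse, since for $b>0$ its potential $\bigl(\frac{1}{\sin\theta}-\frac b2\bigr)^2$ is pointwise smaller than $\bigl(\frac{1}{\sin\theta}+\frac b2\bigr)^2$. Thus on $[0,2]$ one has $\mathfrak{e}(b)=\min\{\lambda_0(b),\lambda_1(b)\}$.

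I would then locate a crossover. At $b=0$, $\lambda_0(0)=0<2=\lambda_1(0)$, so $\lambda_0$ is selected near the origin. At $b=2$ I claim $\lambda_1(2)<1=\lambda_0(2)$: testing $q_{1,2}$ against $f(\theta)=\sin\theta$, which lies in $\mathcal{D}(q_{1,2})$ by \eqref{domain qmb}, gives the explicit ratio $q_{1,2}(f)/\norm{f}_{\mathcal{H}}^2=(12-3\pi)/4<1$. Since $\lambda_0$ is strictly increasing and $\lambda_1$ will be shown strictly decreasing on $[0,2]$, the difference $\lambda_0-\lambda_1$ is strictly increasing, so there is a unique $b_*\in(0,2)$ with $\lambda_0(b_*)=\lambda_1(b_*)$. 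Consequently $\mathfrak{e}=\lambda_0$ is strictly increasing on $[0,b_*]$ and $\mathfrak{e}=\lambda_1$ is strictly decreasing on $[b_*,2]$, so $b_*$ is a strict local maximum of $\mathfrak{e}$ and the theorem follows.

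The analytic heart of the argument is the monotonicity of $\lambda_1$. Since $S_{1,b}$ has discrete spectrum and a simple, positive ground state $\psi_b$ depending analytically on $b$, the Feynman--Hellmann formula gives, after normalizing $\norm{\psi_b}_{\mathcal{H}}=1$,
\[
\lambda_1'(b)=-\int_0^{\pi}\Bigl(\frac{1}{\sin\theta}-\frac b2\Bigr)|\psi_b|^2\,\sin\theta\,d\theta=-\Bigl(\int_0^{\pi}|\psi_b|^2\,d\theta-\frac b2\Bigr).
\]
The key observation is that the weight $\sin\theta$ cancels in the first integral, leaving the flat $L^2$-norm; and since $\sin\theta\le 1$ with strict inequality away from $\theta=\frac\pi2$, one has $\int_0^{\pi}|\psi_b|^2\,d\theta>\int_0^{\pi}|\psi_b|^2\sin\theta\,d\theta=1\ge\frac b2$ for every $b\in[0,2]$. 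Hence $\lambda_1'(b)<0$ on $[0,2]$, as needed.

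The main obstacle is making this monotonicity step fully rigorous: one must confirm that $S_{1,b}$ has a one-dimensional, strictly positive ground eigenspace and that $b\mapsto(\lambda_1(b),\psi_b)$ is regular enough to justify Feynman--Hellmann, despite the singular weight $\sin\theta$ and the singular coefficient $\frac{1}{\sin^2\theta}$ at the poles $\theta=0,\pi$. I expect this to follow from the form-domain description \eqref{domain qmb} together with standard analytic perturbation theory for Kato's type (B) families, but the boundary behaviour at the poles must be checked. The remaining ingredients --- the exact value of $\lambda_0$, the uniform lower bound for $|m|\ge2$, and the test-function estimate at $b=2$ --- are routine computations.
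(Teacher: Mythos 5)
Your proposal is correct in outline and follows essentially the same strategy as the paper: reduce to the modes $m=0$ and $m=1$ on $[0,2]$ via the pointwise bound $V_{m,b}\ge 1\ge b^2/4$ for $|m|\ge 2$, use $\lambda_0(b)=b^2/4$ as the explicit increasing branch, detect the crossing with the trial function $\sin\theta$ (your value $(12-3\pi)/4$ at $b=2$ agrees with the paper's formula $b^2/4-\tfrac{3}{8}\pi b+2$), and then exploit the positivity of $\int_0^\pi\bigl(\tfrac{1}{\sin\theta}-\tfrac b2\bigr)|\psi|^2\sin\theta\,d\theta$ to push $\lambda_1$ down. The one genuine divergence is how that last positivity is used. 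The paper only needs a \emph{local, one-sided} statement: inserting the ground state $f_0$ of $L_{1,b_0}$ into $q_{1,b_0+\delta}$ gives $\lambda_1(b_0+\delta)\le\lambda_1(b_0)-\delta C_{b_0}+\delta^2/4$ with $C_{b_0}>0$, which requires no regularity of $b\mapsto\psi_b$ whatsoever. You instead invoke Feynman--Hellmann to get $\lambda_1'<0$ on all of $[0,2]$, which buys a cleaner global picture (a unique crossing $b_*$ and unimodality of $\mathfrak{e}$ on $[0,2]$) but at the price of the analyticity and ground-state-simplicity issues you correctly flag as the remaining obstacle. Two remarks: (a) those issues are resolvable — $q_{1,b}$ is a quadratic polynomial in $b$ whose linear coefficient $-\int|f|^2\,d\theta$ is infinitesimally form-bounded by $q_{1,0}$, so one has a type (B) analytic family, and the ground state of a Sturm--Liouville operator with discrete spectrum is simple and positive — but (b) you can avoid them entirely: the same one-sided trial-function inequality, $\lambda_1(b')\le\lambda_1(b)-(b'-b)\bigl(\int_0^\pi|\psi_b|^2\,d\theta-\tfrac b2\bigr)+\tfrac{(b'-b)^2}{4}$ with $\int_0^\pi|\psi_b|^2\,d\theta>1\ge b/2$, already yields $\lambda_1(b')<\lambda_1(b)$ for all $0\le b<b'\le 2$ (since $b'-b\le 2-b<4-2b$), so your global monotonicity claim survives without any perturbation theory. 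With that substitution your argument is complete and slightly stronger than what the paper records.
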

	\subsection{Discussion and motivation}\label{Discussion}
	Let us put the result into context. The identification of domains for which the lowest eigenvalue for the magnetic Laplacian is monotone or not with respect to the strength of the magnetic field has been actively studied in the past years. For weak magnetic fields, diamagnetic inequality gives a monotonic behaviour. Moreover, for strong magnetic fields, monotonicity, known as strong diamagnetism, has also been proved for a large variety of domains in \(\mathbb{R}^2\) \cite{Assaad,BonnaillieFournais, FournaisHelffer} and also in \(\mathbb{R}^3\) \cite{FournaisHelffer3D, FournaisPersson}. A detailed discussion and summary of this can be found in~\cite{FournaisHelfferBook}.
	
	For a magnetic field, which strength lies in the middle region, less is known. Non-monotone phase transitions occur in domains having specific topo\-logical properties. A famous example of these phenomena is the Little--Parks effect for 2D annuli \cite{Erdos,FournaisSundqvist,LittleParks}, where an oscillatory behaviour in the critical temperature of the superconductor appears as the magnetic field varies. Similar phenomena are observed for thin domains \cite{HelfferKachmar}. In the disc, counterexamples can be found applying a non-uniform magnetic field \cite{FournaisSundqvist} or by imposing Robin boundary condition with a strong coupling parameter \cite{KachmarSundqvist}. The topological defects can also be induced by an Aharonov--Bohm magnetic potential \cite{KachmarPan2, KachmarPan1}.
	
	However, counterexamples in three dimensions are less studied. Using the asymptotics of the lowest eigenvalue for the magnetic Robin Laplacian when the Robin parameter goes to \(+\infty\), obtained in \cite{helffer2020magnetic}, we are able to provide one of the first counterexamples.
	
	The magnetic Robin Laplacian is given by
	\[P_{\gamma} =(-i\nabla + \mathbf{a})^2 \]
	with domain 
	\[\mathcal{D}(P_{\gamma}) = \{u \in H^2(\Omega) : i\mathbf{n} \cdot (-i\nabla+ \mathbf{a})u + \gamma u = 0 \text{ on } \partial\Omega\},\]
	where \(\gamma > 0\) is the Robin parameter, \(\mathbf{n}\) the unit outward pointing normal vector of \(\partial\Omega\)  and \(\mathbf{a} \) is any magnetic vector potentital that generates the magnetic field \((0,0,b)\). 
	
	The text is organized as follows. In section~\ref{Section 1} we introduce the needed definitions and auxiliary results regarding the spectrum of the self-adjoint operator associated  to \(q_{m,b}\). In section \ref{Section 2} we prove Theorem \ref{Main theorem}. In the appendix, we study the self-adjoint extensions of operators associated to \( \mathfrak{q}^b_h (u)\).
	\section{Definitions and preliminary results}\label{Section 1}
	We want to study the differential expression \(L_{m,b}\) associated with the quadratic form \(q_{m,b}\). 
	
	Integrating by parts, we see that the operator
	\begin{equation}\label{Eq 2}
		L_{m,b} u = r(\theta)^{-1} \Big [ -(p(\theta) u'(\theta))' + q(\theta) u(\theta) \Big ]
	\end{equation}
	with \( r(\theta) = \sin{\theta}\), \(p(\theta) = \sin{\theta} \) and \(q(\theta) = \sin{\theta}\left(m/\sin{\theta}- b/2 \right)^2\), is associated with the quadratic form \(q_{m,b}\). Hence, finding \(\lambda_m (b)\) is equivalent to solve the self--adjoint Sturm--Liouville eigenvalue problem 
	\begin{equation}\label{Eq 3}
		(L_{m,b}-\lambda) u = 0 
	\end{equation}
	in \(\mathcal{H}  = L^2((0,\pi); \sin{\theta}\, d\theta) \), where \(\lambda \in \mathbb{R}\). Because of this, it is useful to introduce the maximal operator \(T_{m,b}\) and the pre-minimal operator \(T'_{0,m,b}\) acting as \( L_{m,b}\), with domains
	\begin{equation}\label{Eq 4} 
		\mathcal{D}(T_{m,b}):= \{f\in \mathcal{H}  : f, f' \in AC_{loc}(0,\pi), L_{m,b} f \in \mathcal{H} \} ,
	\end{equation}
	\begin{equation}\label{Eq 5}  \mathcal{D}(T'_{0, m,b}):=  \{f\in \mathcal{D}(T_{m,b}) : f \text{ has compact support}\},
	\end{equation}
	where \(AC_{loc}(0,\pi)\) denotes the space of locally absolutely continuous functions on \((0, \pi)\) (see \cite[Chapter 8]{Brezis} for more details). 
	
	The pre-minimal operator is symmetric, densely defined and \((T'_{0, m,b})^{\ast} = T_{m,b}\). Thus, any self--adjoint extension \(A\) of \(T'_{0, m,b}\) satisfies
	\[ T'_{0, m,b}\subset A = A^{\ast} \subset T_{m,b} .\]
	In particular, \(T'_{0, m,b}\subset S_{m,b} \subset T_{m,b}\), where \(S_{m,b}\) corresponds to the Friedrichs extension associated with \(q_{m,b}\) with the form domain introduced in \eqref{domain qmb}.
	
	We also define the minimal operator \( T_{0,m,b} \) as the closure of the pre-minimal operator \(T'_{0, m,b}\) for \(m\neq 0\).
	
	\begin{lemma}\label{Lemma2}
		If \(m\in \mathbb{Z}\) and \(b\in\mathbb{R}\), then \(S_{m,b}\) has pure discrete spectrum.
	\end{lemma}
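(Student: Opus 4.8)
The plan is to prove that $S_{m,b}$ has compact resolvent, which for a self-adjoint operator is equivalent to its spectrum being purely discrete. Since $q_{m,b}\ge 0$, the Friedrichs extension $S_{m,b}$ is non-negative, and by the min--max characterization it suffices to show that the form domain $\mathcal{D}(q_{m,b})$, equipped with the form norm $\norm{f}_{q}^2 := q_{m,b}(f) + \norm{f}_{\mathcal{H}}^2$, embeds compactly into $\mathcal{H}$. Concretely, I would take a sequence $(f_n)$ bounded in the form norm and extract a subsequence converging in $\mathcal{H}=L^2((0,\pi);\sin\theta\,d\theta)$, splitting the interval into an interior part and two boundary layers.

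On any compact $[\epsilon,\pi-\epsilon]$ the weight $\sin\theta$ is bounded above and below, so a form-bounded sequence is bounded in $H^1([\epsilon,\pi-\epsilon])$; by the Rellich--Kondrachov theorem it is precompact in $L^2([\epsilon,\pi-\epsilon])$, hence in $L^2([\epsilon,\pi-\epsilon];\sin\theta\,d\theta)$. Running this over $\epsilon=1/k$ and diagonalising yields a subsequence converging in $L^2_{loc}(0,\pi)$. The heart of the matter is then a uniform tail estimate: one must show that $\int_0^{\epsilon}|f_n|^2\sin\theta\,d\theta$, and by the symmetry $\theta\mapsto\pi-\theta$ the analogous tail near $\pi$, tends to $0$ as $\epsilon\to0$ uniformly in $n$. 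Combined with the interior convergence, a standard three-term argument then forces the subsequence to be Cauchy in $\mathcal{H}$, giving the compact embedding.

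For $m\neq0$ the tails are controlled by the confining potential: since $\left(\msin\right)^2\to+\infty$ at the endpoints, there is $\epsilon_0>0$ with $\bigl(\msin-\tfrac b2\bigr)^2\ge \tfrac{m^2}{2\sin^2\theta}$ on $(0,\epsilon_0)$, whence for $\epsilon\le\min(\epsilon_0,\tfrac\pi2)$ one obtains
\[
\int_0^{\epsilon}|f_n|^2\sin\theta\,d\theta \;\le\; \frac{2\sin^2\epsilon}{m^2}\int_0^{\epsilon}\Bigl(\msin-\tfrac b2\Bigr)^2|f_n|^2\sin\theta\,d\theta \;\le\; \frac{2\sin^2\epsilon}{m^2}\,q_{m,b}(f_n),
\]
which is $O(\sin^2\epsilon)$ uniformly in $n$. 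The delicate case is $m=0$, where the potential is bounded and no confinement is available; here I would instead exploit the degeneracy of the weight together with an elementary bound. Choosing a reference point $\theta_1\in(\tfrac14,\tfrac12)$ where $|f_n(\theta_1)|$ is controlled by the mean of $|f_n|^2$, the identity $f_n(\theta)=f_n(\theta_1)-\int_\theta^{\theta_1}f_n'$ and Cauchy--Schwarz with $\int_\theta^{\theta_1}\frac{ds}{\sin s}=O(\log\tfrac1\theta)$ yield the pointwise bound $|f_n(\theta)|^2\le C(1+\log\tfrac1\theta)$, uniform over the bounded family. Integrating against $\sin\theta$ gives $\int_0^{\epsilon}|f_n|^2\sin\theta\,d\theta\le C\int_0^{\epsilon}(1+\log\tfrac1\theta)\sin\theta\,d\theta\to0$.

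I expect the $m=0$ case to be the main obstacle, precisely because the effective potential fails to confine. Indeed, under the Liouville transformation $f\mapsto\sqrt{\sin\theta}\,f$ mapping $\mathcal{H}$ unitarily onto $L^2((0,\pi);d\theta)$, the operator becomes $-g''+V_m$ with $V_m(\theta)=\tfrac{m^2-1/4}{\sin^2\theta}-\tfrac{mb}{\sin\theta}+\tfrac{b^2-1}{4}$, so the singular term is positive and divergent exactly when $m\neq0$ but becomes an attractive critical Hardy term at $m=0$; in the latter regime one must rely on the vanishing of the weight at the endpoints rather than on a growing potential. Once the uniform tail bounds are established in both regimes, the compact embedding of $\mathcal{D}(q_{m,b})$ into $\mathcal{H}$ follows, and hence $S_{m,b}$ has compact resolvent and purely discrete spectrum for every $m\in\mathbb{Z}$ and $b\in\mathbb{R}$.
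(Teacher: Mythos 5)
Your argument is correct, but it takes a genuinely different route from the paper. The paper restricts the problem to the two half-intervals $(0,\tfrac{\pi}{2})$ and $(\tfrac{\pi}{2},\pi)$, verifies the integral condition $\int_0^{\pi/2}\sin x\,\bigl(\int_x^{\pi/2}\tfrac{dt}{\sin t}\bigr)\,dx=\ln 2<\infty$ on the Sturm--Liouville coefficients, and invokes Baxley's theorem to get a compact inverse on each half, then glues the two pieces with a decomposition theorem for essential spectra; notably this treats all $m$ at once, since the criterion involves only the weights $p=r=\sin\theta$ and the sign of the potential. You instead prove directly that the form domain embeds compactly into $\mathcal{H}$, which is more self-contained but forces a case split: for $m\neq 0$ the tail bound $\int_0^\epsilon|f_n|^2\sin\theta\,d\theta=O(\sin^2\epsilon)$ follows from the confining singularity of $V_{m,b}$, while for $m=0$ you extract the pointwise bound $|f_n(\theta)|^2\le C(1+\log\tfrac1\theta)$ from the weighted derivative bound and a reference point, and let the vanishing weight $\sin\theta$ absorb the logarithm. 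Both tail estimates check out (in particular the Cauchy--Schwarz step with $\int_\theta^{\theta_1}\tfrac{ds}{\sin s}=O(\log\tfrac1\theta)$ is valid for each fixed $\theta>0$ even though $f$ need not extend continuously to the endpoint), and combined with Rellich--Kondrachov on compact subintervals and the diagonal/three-term argument this yields the compact embedding, hence compact resolvent. What your approach buys is independence from the two cited black boxes and an explicit identification of the mechanism producing discreteness in each regime (growing potential versus degenerate weight); what the paper's approach buys is brevity and uniformity in $m$.
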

	\begin{proof}
		Consider the operator \(S_{m,b}\) restricted to \((0, \tfrac{1}{2}\pi)\), and let  \(T_{(0,\frac{\pi}{2}),m,b}\)  and \(T'_{(0,\frac{\pi}{2}),m,b}\)  be the maximal and pre-minimal operator corresponding to \(L_{m,b}\) restricted to this interval. \( T'_{(0,\frac{\pi}{2}),m,b}\) is a symmetric, densely defined and semi-bounded operator on \(L^2 ((0, \frac{\pi}{2}); \sin{\theta}\, d\theta)\). Thus, we can obtain its Friederichs extension \(S_{(0 , \frac{\pi}{2}),m,b}\). Since
		\begin{equation}\label{Eq 7}
			\int_0^{\frac{\pi}{2}} r(x)\left( \int_{x}^{\frac{\pi}{2}} \frac{1}{p(t)} \, dt\right)\,dx = \int_0^{\frac{\pi}{2}} \sin{x}\left( \int_{x}^{\frac{\pi}{2}} \frac{1}{\sin{t}}\, dt\right)\,dx = \ln{2},
		\end{equation}
		adapting \cite[Theorem 1]{baxley} to the appropriated domain, we have that $0$ belongs to the resolvent set of $S_{(0,\pi/2),m,b}$ and that its inverse is compact. Thus $S_{(0,\pi/2),m,b}$ has empty essential spectrum. By symmetry, the operator $S_{(\pi/2,\pi),m,b}$ also has empty essential spectrum. This implies that $S_{m,b} $ has empty essential spectrum \cite[Theorem 9.11]{Teschl}.
	\end{proof}
\section{Nonmonotonicity. Proof of Theorem 1.}\label{Section 2}
In Figure~\ref{fig:my_label}, we see that the crossing between \(\lambda_0(b)\) and \(\lambda_1(b)\) occurs on the interval \([0,2]\), and we expect that \(\mathfrak{e}(b)\) equals \(\lambda_0(b)\) or \(\lambda_1 (b)\) on this interval. Because of this, we study the case \(0\leq b <2\) in order to check the non-monotonicity.

Let \(V_{m,b}(\theta):= \left(\frac{m}{\sin{\theta}}- \frac{b}{2}\right)^2\). If \(m=0\) then \(V_{0,b}(\theta) = \frac{b^2}{4}\), and this implies that \(\lambda_0(b) = \frac{b^2}{4}\) (the constant function is an eigenfunction) where \(b\geq 0\). 

It is sufficient to study the case $m\geq 0$ since for \(m \leq -1\) we have that \(\lambda_m(b) > \frac{b^2}{4}\) since \(V_{m,b}(\theta)>b^2/4\), so
\[ \mathfrak{e}(b) = \inf_{m\in \mathbb{N}\cup \{0\}}\lambda_m(b).\]

\begin{figure}[h]
	\centering
	\includegraphics[width=0.5\textwidth]{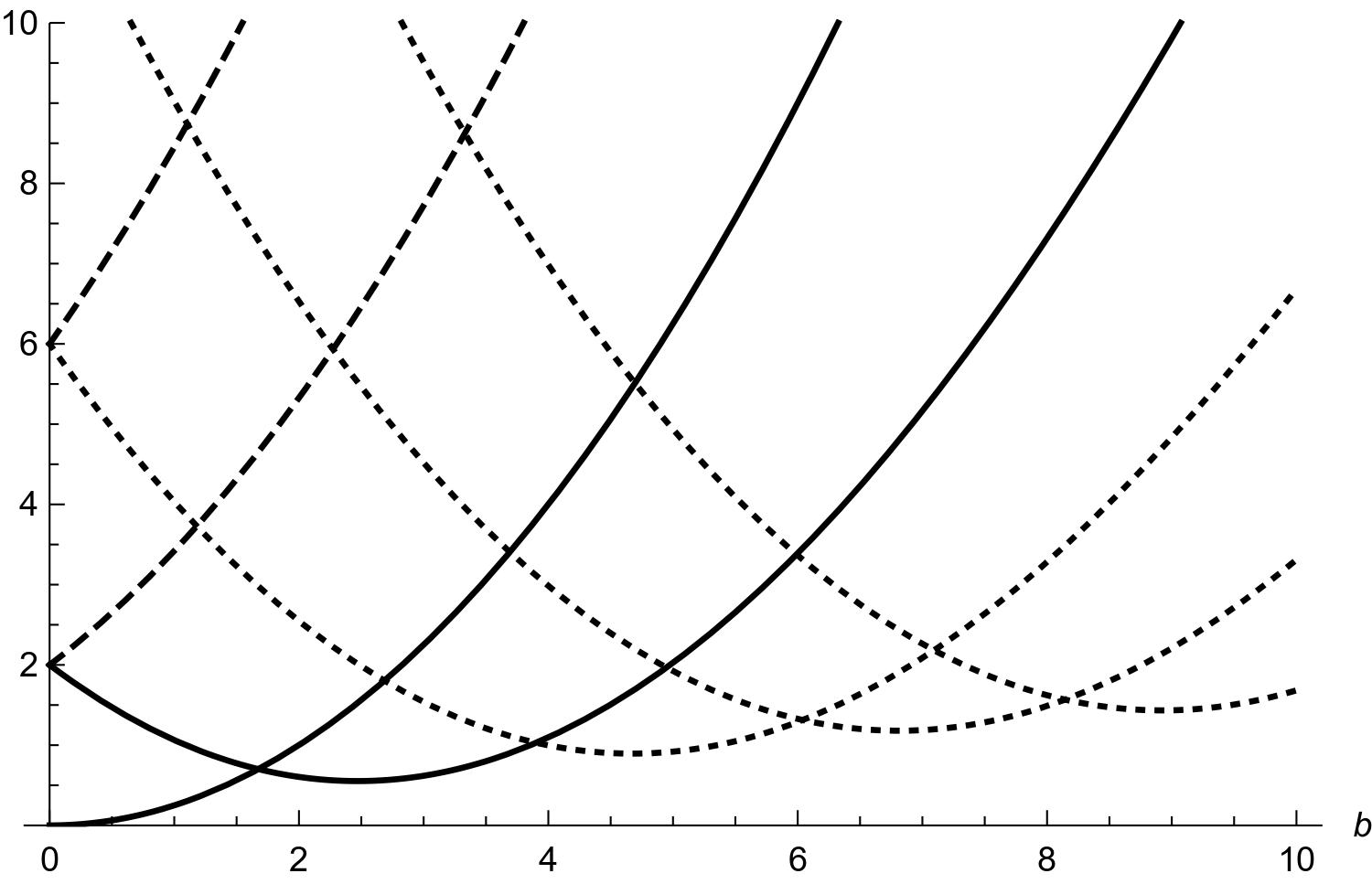}
	\caption{Numerical computations of \(\lambda_m(b)\) for \(-2\leq m \leq 4\). \(\lambda_0(b)\) and \(\lambda_1(b)\) are depicted as solid lines. Dotted lines represent values of \(\lambda_m(b)\) for \(m\geq 2\), and the dashed ones of \(m=-1, -2\).  }
	\label{fig:my_label}
\end{figure}
\begin{enumerate}[(i)]
	\item \( 0\leq b< 1 \): 
	
	We note that for \(m\geq 1\)
	\[\left(\frac{m}{\sin{\theta}}- \frac{b}{2}\right)^2 = \frac{m}{\sin{\theta}}\left(\frac{m}{\sin{\theta}} - b\right) + \frac{b^2}{4} > \frac{b^2}{4}.\]
	Hence, $\lambda_m(b)>\lambda_0(b)$ and \( \mathfrak{e}(b) = \lambda_0 (b) = \frac{b^2}{4}.\) 
	Observe that \(b\mapsto\mathfrak{e}(b)\) is increasing in this interval.
	\item\( 1\leq b< 2 \):
	
	Let \(m \geq 2\), then \(\frac{m}{\sin{\theta}} \geq 2\) and \(b<2\), so
	\[ \left( \msin - \frac{b}{2}\right)^2 > 1 >\frac{b^2}{4},  \]
	so \(\mathfrak{e}(b)\) equals \(\lambda_0(b)\) or \( \lambda_1(b)\) in this interval.
	
	Using \(g(\theta) = \sin \theta \) as a trial function, one can find a $b_0 \in [1,2]$  such that $\lambda_1(b) < \frac{b^2}{4}= \lambda_0 (b)$ for \(b\in (b_0,2)\). Indeed
	\[\frac{q_{1,b}(g)}{\norm{g}^2_{ \mathcal{H}} } =  \frac{b^2}{4} - \frac{3}{8}\pi b + 2,\]
	which is smaller than \(\lambda_0(b)\) for \(b> \tfrac{16}{3}\pi\simeq 1.7\). Thus, we have a crossing point.
	
	Now, we want to increase $b_0$ by a small positive amount \(\delta >0\) to see \(\lambda_1 (b_0) > \lambda_1 (b_0 + \delta )  \). This would mean that we do not have monotonicity. 
	We can rearrange \( V_{1,b_0 + \delta}(\theta)\) as
	\[ V_{1,b_0 + \delta}(\theta) = \left(\frac{1}{\sin{\theta}}- \frac{b_0}{2}\right)^2 + \frac{\delta^2}{4} - \delta \left(\frac{1}{\sin{\theta}} - \frac{b_0}{2}\right). \] 
	Having this in mind, the operator associated with \(q_{1,b_0 +\delta}\) is given by
	\begin{equation}\label{Eq 9}
		L_{1, b_0+\delta} = -\frac{d^2}{d\theta^2} - \frac{\cos{\theta}}{\sin{\theta}} \frac{d}{d\theta} + V_{1,b_0+\delta}(\theta) ,
	\end{equation}
	
	Let \(f_0 \in \mathcal{D} (L_{1, b_0})\) be a normalized positive eigenfunction of \( L_{1, b_0}\) with eigenvalue \(\lambda_1(b_0)\). 
	Then 
	\[ \begin{aligned}
		q_{1, b_0 + \delta } (f_0) &= \int_0^{\pi} \left(|f_0'(\theta)|^2 + V_{1,b_0 + \delta}(\theta)  |f_0|^2\right) \sin{\theta} \, d\theta\\
		& =  q_{1, b_0  } (f_0) + \frac{\delta^2}{4} - \delta  \int_0^{\pi} \underbrace{\left(\frac{1}{\sin{\theta}}- \frac{b_0}{2}\right)}_{>0} |f_0(\theta)|^2\sin{\theta}\, d\theta\\
		& = \lambda_1(b_0) - \delta C_{b_0} + \frac{\delta^2}{4},
	\end{aligned}
	\]
	where \(C_{b_0}>0\). Thus, for $\delta>0$ small enough we get \(\mathfrak{e}(b_0+\delta)\leq \lambda_1 (b_0 +\delta) < \lambda_1(b_0)=\mathfrak{e}(b_0)\). We conclude that Theorem~\ref{Main theorem} holds. 
\end{enumerate}
\begin{remark}
	This argument can be extended to other crossing points \(b_m\) between \(\lambda_m(b)\) and \(\lambda_{m+1}(b)\), where \(m=1,2,\ldots\), as long as \(m+1 >\frac{1}{2}b_m\). The numerical computations depicted in the Figure 1, suggest that such crossing points are expected. This would mean that the \(\mathfrak{e}(b)\) is not only non-monotonic on \((0,2)\), but also on intervals with endpoints greater than \(2\). 
	
	The main difficulty to extend the result for higher \(b\) is to find lower bounds of \(\lambda_m(b)\) for \(m\geq 1\). Our proof is based on the fact that for \(m =0\) we know explicitly the lowest eigenvalue. 
\end{remark}
\appendix
\section{Extensions of \(T'_{0,m,b}\)}
Although it is not strictly necessary for the proof of Theorem 1, it is interesting to understand better \(S_{m,b}\). In particular, one can show that the pre-minimal operator \(T'_{0,m,b}\) is essentially self-adjoint for all \(m\neq 0\). 

We remind that the endpoint \(0\) (alternatively \(\pi\)) is in the limit circle case (l.c.c.) if all solutions of \(L_{m,b} u = \lambda u\), with \(\lambda\in \mathbb{C}\), are in \(L^2((0,d); \sin{\theta}\,d\theta)\) for some (and hence any) \(d\in (0,\pi)\). An endpoint is in the limit point case (l.p.c.) if it is not in the limit circle case~\cite{Zettl}.
\begin{lemma}\label{Lemma1}
	For \(m\neq 0 \), \(T'_{0,m,b}\) is essentially self-adjoint. If \(m=0\) then both endpoints are in the limit circle case. 
\end{lemma}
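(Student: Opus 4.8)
The plan is to invoke the classical Weyl limit-point/limit-circle dichotomy, which reduces essential self-adjointness of \(T'_{0,m,b}\) to an endpoint classification: the minimal operator \(T_{0,m,b}\) (the closure of \(T'_{0,m,b}\)) has deficiency indices equal to the number of limit circle endpoints, so \(T'_{0,m,b}\) is essentially self-adjoint exactly when both \(0\) and \(\pi\) are in the limit point case, whereas two limit circle endpoints yield deficiency indices \((2,2)\). Moreover, the expression \(L_{m,b}\) is invariant under the reflection \(\theta\mapsto\pi-\theta\) --- under which \(\sin\theta\), the potential \((m/\sin\theta-b/2)^2\), and the weight \(\sin\theta\,d\theta\) are all unchanged --- and this reflection is unitary on \(\mathcal H\) and exchanges neighbourhoods of the two endpoints. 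Hence \(\pi\) always inherits the classification of \(0\), and it suffices to analyse the behaviour as \(\theta\to0^+\).

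To expose the singularity cleanly I would pass to Schr\"odinger form via the unitary \(u\mapsto w=\sqrt{\sin\theta}\,u\) from \(\mathcal H\) onto \(L^2((0,\pi),d\theta)\). A direct computation conjugates \(L_{m,b}\) to \(-\tfrac{d^2}{d\theta^2}+Q\) with
\[ Q(\theta)=\frac{m^2-\tfrac14}{\sin^2\theta}-\frac{mb}{\sin\theta}+\frac{b^2-1}{4}, \]
so that \(Q(\theta)=(m^2-\tfrac14)\theta^{-2}+O(\theta^{-1})\) as \(\theta\to0^+\): an inverse-square singularity with leading coefficient \(c=m^2-\tfrac14\). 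The standard classification for \(-w''+Qw\) with \(Q\sim c\,\theta^{-2}\) states that the endpoint is limit point iff \(c\ge\tfrac34\) and limit circle iff \(c<\tfrac34\). Here \(c\ge\tfrac34\iff m^2\ge1\iff m\ne0\), which gives the limit point case for \(m\ne0\) and the limit circle case for \(m=0\), as claimed.

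The main point requiring care is the rigour of these asymptotics at the regular singular point \(\theta=0\). The indicial equation \(\alpha^2=m^2\) yields fundamental solutions behaving like \(\theta^{|m|}\) and \(\theta^{-|m|}\) (and like \(1\) and \(\ln\theta\) when \(m=0\)); testing against the weight, \(\int_0^d\theta^{-2|m|}\sin\theta\,d\theta\sim\int_0^d\theta^{1-2|m|}\,d\theta\) converges iff \(|m|<1\), which reproduces the dichotomy directly. The subtlety is that the indicial exponents differ by the integer \(2|m|\), so the subdominant Frobenius solution may acquire a logarithmic correction of the form \(\theta^{|m|}\ln\theta\). This is harmless: for \(m\ne0\) the leading \(\theta^{-|m|}\) term still forces the solution out of \(\mathcal H\), while for \(m=0\) the \(\ln\theta\) behaviour is square-integrable against \(\sin\theta\,d\theta\). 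One may either carry out this Frobenius bookkeeping explicitly, or avoid it altogether by using the inverse-square criterion above, whose \(\tfrac34\) threshold is insensitive to the lower-order \(O(\theta^{-1})\) and constant parts of \(Q\). Combining the endpoint \(0\) with its mirror image \(\pi\) then yields essential self-adjointness for \(m\ne0\) and the limit circle case at both endpoints for \(m=0\).
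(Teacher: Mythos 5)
Your proposal is correct and follows essentially the same route as the paper: Liouville transformation to Schr\"odinger form with potential \(\hat q(\theta)=\frac{m^2-1/4}{\sin^2\theta}-\frac{mb}{\sin\theta}+\frac{b^2-1}{4}\), reduction to the endpoint \(0\) by the reflection symmetry, classification via the inverse-square \(\tfrac34\) threshold, and the deficiency-index theorem. The only difference is organizational: the paper uses Weidmann's pointwise criterion \(\hat q\ge C+\tfrac{3}{4}\theta^{-2}\) for \(m\ne 1\) and falls back on explicit Frobenius asymptotics precisely in the borderline cases \(m=1\) and \(m=0\), whereas you handle all cases uniformly through the indicial exponents \(\pm|m|\) --- and you correctly flag that the borderline \(|m|=1\) case, where the \(O(\theta^{-1})\) term spoils the pointwise bound, is the one that genuinely requires that Frobenius bookkeeping.
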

\begin{proof}
	In order to classify our Sturm--Liouville operator, it is useful to apply the Liouville transformation. This is possible since 
	\[p(\theta), p'(\theta), r(\theta), r'(\theta) \in AC_{loc}(0,\pi)  \text{ and } p(\theta), r(\theta) >0\] for all \(\theta \in (0, \pi)\) (see \cite[section 7]{Everitt}). Applying the unitary transformation \(u(\theta) = w(\theta) \sin^{-1/2}{\theta} \),  we can rewrite equation \eqref{Eq 3} in the Liouville normal form 
	\begin{equation}\label{Eq 6}
		w''(\theta) + [\lambda - \hat{q}(\theta)] w(\theta) =0, 
	\end{equation}
	where
	\begin{align*} \hat{q}(\theta) &= \frac{q(\theta)}{r(\theta)} - \frac{(p(\theta)r(\theta))^{1/4} }{r(\theta)}  \left(p(\theta) \left((p(\theta)r(\theta))^{-1/4}\right)'\right)' \\
		&= \left(\msin - \frac{b}{2}\right)^2 - \frac{1}{4}\frac{\cos^2\theta}{\sin^2 {\theta }} - \frac{1}{2}. 
	\end{align*}
	Due to the symmetry of the problem, we can focus on the interval \((0, \frac{\pi}{2})\). All the results stated at \(0\) can be analogously stated at \(\pi\). We distinguish three cases:
	
	\begin{enumerate}[(i)]
		\item  \( m \neq 1\). Observe that for \(\theta\) close to \(0\)
		\[ \hat{q}(\theta) \sim\left( m^2 - \frac{1}{4}\right) \frac{1}{\theta^2} + \mathcal{O}\left(\frac{1}{\theta}\right) .\]
		Thus, we can find a constant \(C\) such that \(\hat{q}(\theta)\geq C +\frac{3}{4}\frac{1}{\theta^2}\) for \(\theta\) close to~\(0\). We satisfy the conditions of \cite[Theorem 3.3]{WeidmannApproxRegular}, which ensures that \(L_{m,b}\) is in the limit point case. 
		
		\item \(m=1\). A constant \(C\) as in the previous case cannot be found since
		\[ \hat{q}(\theta) =  \frac{3}{4\sin^2{\theta}} - \frac{b}{\sin{\theta}} + \frac{b^2 - 1}{4} .\] 
		However, using asymptotics methods for \(\theta\) close to \(0\), we see that the general solution of the ODE generated by the Sturm-Liouville eigenvalue problem considering the Liouville form of \(L_{1,b}\) is given by 
		\[u(\theta) = c_1(\theta^{3/2} - \frac{b}{3} \theta^{5/2} + \mathcal{O}(\theta^{7/2})) + c_2(\theta^{-1/2} - b \theta^{1/2} + \mathcal{O}(\theta^{3/2})) ,\]
		where \(c_1\) and \(c_2\) are suitable constants. Note that one of the two independent solutions is not in \(L^2\). Thus, \(L_{1,b}\) is in the limit point case. 

	We fulfill the conditions of \cite[Theorem 10.4.1 (i)]{Zettl}, which states that if \(L_{m,b}\) is in l.p.c. at \(0\) and \(\pi\), this is equivalent to have deficiency indices \( (0,0)\). Hence, \(T'_{0,m,b}\) is essentially self-adjoint and the minimal operator \(T_{0,m,b}\) is the only self-adjoint extension.
	\item \(m=0\). In this case
	\[\hat{q}(\theta)=  -\frac{1}{4\sin^2{\theta}}  + \frac{b^2 - 1}{4} . \]
	Using again asymptotics methods for \(\theta\) close to \(0\), we see that the general solution considering the Liouville form of \(L_{0,b}\) is given by 
	\[u(\theta) = d_1(\theta^{1/2} - \mathcal{O}( \theta^{5/2})) + d_2 \log{\theta} \left(\theta^{1/2} + \mathcal{O}(\theta^{5/2})\right),\]
	where \(d_1\) and \(d_2\) are again suitable constants. Observe that in this case both solutions are in \(L^2\). Thus, \(L_{0,b}\) is in the limit circle case. 
	\end{enumerate}	
\end{proof}
\begin{remark}
	Note that the Friedrichs extension \(S_{m,b}\) introduced before must coincide with  \(T_{0,m,b}\). For \(m=0\), `self- adjoint boundary conditions' (see \cite[Section 10.4]{Zettl} for more details) are used to describe the domain of \(S_{0,b}\). 
\end{remark}
\subsection*{Acknowledgment}
I would like to thank A. Kachmar for the suggestion and discussion of the problem.

\bibliographystyle{plain} % We choose the &quot;plain&quot; reference style
\bibliography{main2} 

\end{document}